\numberwithin{equation}{section}
\theoremstyle{plain}
	\newtheorem{theorem}{Theorem}[section]
	\newtheorem{lemma}[theorem]{Lemma}
	\newtheorem{proposition}[theorem]{Proposition}
	\newtheorem{corollary}[theorem]{Corollary}
\theoremstyle{definition}
	\newtheorem{definition}[theorem]{Definition}
\newcommand{\N}{\mathbb{N}}
\newcommand{\Z}{\mathbb{Z}}
\newcommand{\R}{\mathbb{R}}
\newcommand{\haus}{\mathcal{H}}
\newcommand{\de}{\partial}
\DeclareMathOperator{\dist}{dist}
\DeclareMathOperator{\conv}{conv}
\DeclarePairedDelimiter{\scalar}{<}{>}                                     
\DeclarePairedDelimiter{\set}{\{}{\}}
\mathchardef\ordinarycolon\mathcode`\:
\newcommand{\mres}{\mathbin{\vrule height 1.6ex depth 0pt width
0.13ex\vrule height 0.13ex depth 0pt width 1.3ex}}
\newcommand{\closure}[2][3]{%
  {}\mkern#1mu\overline{\mkern-#1mu#2}}
\begin{document}

\title[Monotonicity of perimeter of convex bodies]{On the monotonicity of perimeter of convex bodies}

\author[G. Stefani]{Giorgio Stefani}

\address{Scuola Normale Superiore, Piazza Cavalieri 7, 56126 Pisa, Italy}

\email{giorgio.stefani@sns.it}

\dedicatory{Dedicated to Francesco Leonetti on the occasion of his 60th birthday}

\date{November 6, 2016}

\keywords{Convex body, anisotropic perimeter, Hausdorff distance, Wulff inequality}

\subjclass[2010]{Primary 52A20; Secondary 52A40}

\begin{abstract}
Let $n\ge2$ and let $\Phi\colon\R^n\to[0,\infty)$ be a positively $1$-homogeneous and convex function. Given two convex bodies $A\subset B$ in $\R^n$, the monotonicity of anisotropic $\Phi$-perimeters holds, i.e.\ $P_\Phi(A)\le P_\Phi(B)$. In this note, we prove a quantitative lower bound on the difference of the $\Phi$-perimeters of $A$ and $B$ in terms of their Hausdorff distance.
\end{abstract}

\maketitle

\section{Introduction}

Let $n\ge2$ and let $A,B\subset\R^n$ be two convex bodies (i.e., compact convex sets with non-empty interior). If $A\subset B$, then the monotonicity of perimeters holds, i.e.\
\begin{equation}\label{eq:perim_ineq}
\haus^{n-1}(\de A)\le\haus^{n-1}(\de B).
\end{equation}
Here and in the following, for all $s\ge0$ we let $\haus^s$ be the $s$-dimensional Hausdorff measure (in particular, $\haus^0$ is the counting measure). Moreover, if $E\subset\R^n$ is a $k$-dimensional convex body, with $1\le k\le n$, we let $\de E$ be its boundary, which is a set of Hausdorff dimension $k-1$.

Inequality~\eqref{eq:perim_ineq} is well-known and dates back to the ancient Greek (Archimedes himself took it as a postulate in his work on the sphere and the cylinder,~\cite{archimedes04}*{p.~36}). Various proofs of~\eqref{eq:perim_ineq} are possible: via the Cauchy formula for the area surface of convex bodies or by the monotonicity property of mixed volumes,~\cite{bonnesen&fenchel87}*{\S7}, by the Lipschitz property of the projection on a convex closed set,~\cite{buttazzoetal95}*{Lemma~2.4}, or by the fact that the perimeter is decreased under intersection with half-spaces,~\cite{maggi12}*{Exercise~15.13}. 

Lower bounds for the deficit $\delta(B,A)=\haus^{n-1}(\de B)-\haus^{n-1}(\de A)$ with respect to the Hausdorff distance $h(A,B)$ of $A$ and $B$ have been recently established for $n=2,3$ in~\cites{lacivita&leonetti08,carozzaetal15,carozzaetal16}. The case $n=2$ was treated for the first time in~\cite{lacivita&leonetti08}, and was subsequently improved in~\cite{carozzaetal15} to the following inequality
\begin{equation}\label{eq:planar_estimate}
\haus^1(\de A)+\dfrac{2h(A,B)^2}{\sqrt{\left(\frac{\haus^1(B\cap L)}{2}\right)^2+h(A,B)^2}+\frac{\haus^1(B\cap L)}{2}}\le\haus^1(\de B),
\end{equation}
where $L=\set{x\in\R^2 : \scalar*{b-a,x-a}=0}$, with $a\in A$ and $b\in B$ such that $|a-b|=h(A,B)$. The case $n=3$ was studied in~\cite{carozzaetal16}, where the authors proved the following inequality
\begin{equation}\label{eq:R3_estimate}
\haus^2(\de A)+\dfrac{\pi d h(A,B)^2}{\sqrt{d^2+h(A,B)^2}+d}\le\haus^2(\de B),
\end{equation}
with $h(A,B)$, $a\in A$ and $b\in B$ as above and $d=\dist(a,\de B\cap\de H)$, where $H=\set{x\in\R^3 : \scalar*{b-a,x-a}\le0}$. Inequalities~\eqref{eq:planar_estimate} and~\eqref{eq:R3_estimate} are sharp, in the sense that they are equalities at least in one case, see~\cites{carozzaetal15,carozzaetal16}. Inequality~\eqref{eq:R3_estimate}, however, does not seem to be the correct generalization of inequality~\eqref{eq:planar_estimate} to the case $n=3$, because of the distance $d=\dist(a,\de B\cap\de H)$ replacing the bigger radius $r=\sqrt{\haus^2(B\cap\de H)/\pi}$.   

Inequality~\eqref{eq:perim_ineq} naturally generalizes to the anisotropic (Wulff) perimeter. Precisely, given a positively $1$-homogeneous convex function $\Phi\colon\R^n\to[0,\infty)$, if $A\subset B$ are two convex bodies in $\R^n$, then
\begin{equation}\label{eq:phi-perim_ineq}
P_\Phi(A)\le P_\Phi(B).
\end{equation} 
Here $P_\Phi(E)$ denotes the \emph{anisotropic $\Phi$-perimeter} of a convex body $E\subset\R^n$ and is defined as
\begin{equation*}
P_\Phi(E)=\int_{\de E} \Phi(\nu_E)\ d\haus^{n-1},
\end{equation*}   
where $\nu_E\colon\de E\to\R^n$ is the inner unit normal of $E$ (defined $\haus^{n-1}$-a.e. on $\de E$). Clearly, when $\Phi(x)=|x|$ for all $x\in\R^n$, then $P_\Phi(E)=\haus^{n-1}(\de E)$, the Euclidean perimeter of $E$. The $\Phi$-perimeter obeys the scaling law $P_\Phi(\lambda E)=\lambda^{n-1}P_\Phi(E)$, $\lambda>0$, and it is invariant under translations. However, at variance with the Euclidean perimeter, $P_\Phi$ is not invariant by the action of $O(n)$, or even of $SO(n)$, and in fact it may even happen that $P_\Phi(E)\ne P_\Phi(\R^n\setminus E)$, provided that $\Phi$ is not symmetric with respect to the origin. 

Similarly to inequality~\eqref{eq:perim_ineq}, inequality~\eqref{eq:phi-perim_ineq} is a consequence of the Cauchy formula for the anisotropic perimeter or of the monotonicity property of mixed volumes,~\cite{bonnesen&fenchel87}*{\S7,~\S8}, or of the fact that the anisotropic perimeter is decreased under intersection with half-spaces,~\cite{maggi12}*{Remark~20.3}.

The aim of this note is to establish a lower bound for the anisotropic deficit $\delta_\Phi(B,A)=P_\Phi(B)-P_\Phi(A)$ with respect to the Hausdorff distance $h(A,B)$ of $A$ and $B$. Before stating our main result, we need some preliminaries. Here and in the rest of the paper, we let
\begin{equation*}
\mathbb{S}^{n-1}=\set*{x\in\R^n : |x|=1}, \qquad \nu^\perp=\set*{x\in\R^n: x\cdot\nu=0} \quad \forall\nu\in\mathbb{S}^{n-1}.
\end{equation*} 

\begin{definition}[Admissible $\Phi$]\label{def:admissible}
Let $n\ge2$ and let $\Phi\colon\R^n\to[0,\infty)$ be a positively $1$-homogeneous convex function. We say that $\Phi$ is \emph{admissible} if, for each $\nu\in\mathbb{S}^{n-1}$, there exist two functions $g_\nu\colon[0,\infty)^2\to[0,\infty)$ and $\phi_\nu\colon\nu^\perp\to[0,\infty)$ such that
\begin{enumerate}[(i)]
\item\label{item:1} $g_\nu$ is non-identically zero, positively $1$-homogeneous, convex and $s\mapsto g_\nu(s,t)$ is non-decreasing for each fixed $t\in[0,\infty)$;  
\item\label{item:2} $\phi_\nu$ is positively $1$-homogeneous, convex and coercive on $\nu^\perp$, i.e.\ $\phi_\nu(z)>0$ for all $z\in\nu^\perp$, $z\ne0$;
\item\label{item:3} for all $x\in\R^n$ with $x\cdot\nu\ge0$, it holds
\begin{equation}\label{eq:def_g_phi_nu}
\Phi(x)\ge g_\nu(\phi_\nu(x-(x\cdot\nu)\nu),x\cdot\nu). 
\end{equation} 
\end{enumerate}
\end{definition}

If $\Phi$ is positively $1$-homogeneous, convex and coercive on $\R^n$, i.e.\ $\Phi(x)>0$ for all $x\in\R^n$, $x\ne0$, then $\Phi$ is admissible, since the choice $\phi_\nu(z)=|z|$, $z\in\nu^\perp$, and $g_\nu(s,t)=c\sqrt{s^2+t^2}$, $s,t\ge0$, with $c=\min\set*{\Phi(x):|x|=1}$, is possible for all $\nu\in\mathbb{S}^{n-1}$ (although not the best one for special directions in general). 

We can now state our main result, which is contained in the following theorem. Here and in the rest of the paper, for each $\nu\in\mathbb{S}^{n-1}$, we let $W_\nu\subset\nu^\perp$ be the Wulff shape associated with $\phi_\nu$ in $\nu^\perp$, i.e.\ 
\begin{equation}\label{eq:def_W_nu}
W_\nu=\set*{z\in\nu^\perp : \phi_\nu^*(z)\le1},
\end{equation}
where $\phi_\nu^*\colon\nu^\perp\to[0,\infty)$ is given by $\phi_\nu^*(z)=\sup\set*{z\cdot w : \phi_\nu(w)<1}$  for all $z\in\nu^\perp$. Moreover, for any $a\in\R$ we let $a^+=\max\set*{a,0}$. 

\begin{theorem}\label{th:main}
Let $n\ge2$ and let $\Phi\colon\R^n\to[0,\infty)$ be a positively $1$-homogeneous convex function which is admissible in the sense of Definition~\ref{def:admissible}. If $A\subset B$ are two convex bodies in $\R^n$, then
\begin{equation}\label{eq:main_estimate}
P_\Phi(A)+\haus^{n-1}(W_{\nu_H})r^{n-2}\big(g_{\nu_H}(h,r)-\Phi(\nu_H)r\big)^+\le P_\Phi(B),
\end{equation}
where $h=h(A,B)$ is the Hausdorff distance of $A$ and $B$ and
\begin{equation}\label{eq:main_defs_r_H_nu_H}
r=\sqrt[n-1]{\frac{\haus^{n-1}(B\cap\de H)}{\haus^{n-1}(W_{\nu_H})}}, \qquad H=\set{x\in\R^n : \scalar*{b-a,x-a}\le0}, \qquad \nu_H=\frac{a-b}{|a-b|},
\end{equation}
with $a\in A$ and $b\in B$ such that $|a-b|=h(A,B)$.
\end{theorem}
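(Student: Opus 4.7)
Write $\nu:=\nu_H$. Since $|a-b|=h(A,B)=\max_{y\in B}\dist(y,A)$, the point $a$ is the metric projection of $b$ onto the convex body $A$; the convex characterisation of projection gives $A\subset H$ (and clearly $b\in H^c$). My strategy is to invoke the anisotropic monotonicity \eqref{eq:phi-perim_ineq} twice: first on the inclusion $A\subset B\cap H$, and then on the inclusion $C\subset B\cap\overline{H^c}$, where
$D:=B\cap\de H$ and $C:=\conv(\{b\}\cup D)$. Decomposing the boundaries of $B\cap H$ and $B\cap\overline{H^c}$ along the common face $D$ (which carries inner normal $\nu$ as a face of $B\cap H$ and $-\nu$ as a face of $B\cap\overline{H^c}$), the two applications of \eqref{eq:phi-perim_ineq} combine (the terms $\pm\Phi(\pm\nu)$ on $D$ survive in different places but the $\Phi(-\nu)$-terms cancel) into
\[
P_\Phi(B)-P_\Phi(A)\;\ge\;\int_{\de C\setminus\de H}\Phi(\nu_C)\,d\haus^{n-1}\;-\;\Phi(\nu)\,\haus^{n-1}(D).
\]
Since $\haus^{n-1}(D)=\haus^{n-1}(W_\nu)\,r^{n-1}$ by the definition of $r$, and since the deficit is non-negative by \eqref{eq:phi-perim_ineq}, the theorem reduces to the lateral estimate
\[
\int_{\de C\setminus\de H}\Phi(\nu_C)\,d\haus^{n-1}\;\ge\;\haus^{n-1}(W_\nu)\,r^{n-2}\,g_\nu(h,r),
\]
after which the positive part in \eqref{eq:main_estimate} is automatic.

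The core computation is this lateral estimate. Translating so that $a=0$ (hence $b=-h\nu$), I parametrise $\de C\setminus\de H$ by $(w,t)\in\de D\times(0,1)\mapsto(1-t)b+tw$. A direct check shows that the inner unit normal of $C$ depends only on $w$ and equals
\[
\nu_C(w)\;=\;\frac{\rho(w)\,\nu+h\,\nu_D(w)}{\sqrt{\rho(w)^2+h^2}},\qquad \rho(w):=-\,w\cdot\nu_D(w)\ge0,
\]
where $\nu_D(w)$ is the inner unit normal of $D$ in $\nu^\perp$ at $w$; the associated surface area element is $t^{n-2}\sqrt{\rho(w)^2+h^2}\,d\haus^{n-2}(w)\,dt$. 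Since $\nu_C(w)\cdot\nu\ge0$, the admissibility condition \eqref{eq:def_g_phi_nu} applies. The matching factors $\sqrt{\rho^2+h^2}$ coming from $\Phi(\nu_C)$ and from the area element cancel, and after integrating in $t$ (contributing a factor $\tfrac{1}{n-1}$) and using the $1$-homogeneity of $\phi_\nu$, one is left with
\[
\int_{\de C\setminus\de H}\Phi(\nu_C)\,d\haus^{n-1}\;\ge\;\frac{1}{n-1}\int_{\de D}g_\nu\bigl(h\,\phi_\nu(\nu_D(w)),\,\rho(w)\bigr)\,d\haus^{n-2}(w).
\]

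To finish, I apply Jensen's inequality: $g_\nu$ is convex and $1$-homogeneous on $[0,\infty)^2$, hence sub-additive, so
\[
\int_{\de D}g_\nu\bigl(h\,\phi_\nu(\nu_D),\,\rho\bigr)\,d\haus^{n-2}\;\ge\;g_\nu\bigl(h\,P_{\phi_\nu}(D),\,(n-1)\haus^{n-1}(D)\bigr),
\]
where $\int_{\de D}\phi_\nu(\nu_D)\,d\haus^{n-2}=P_{\phi_\nu}(D)$ by definition and $\int_{\de D}\rho\,d\haus^{n-2}=(n-1)\haus^{n-1}(D)$ by applying the divergence theorem in $\nu^\perp$ to the identity vector field. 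The Wulff inequality for $\phi_\nu$ in $\nu^\perp\cong\R^{n-1}$, combined with $\haus^{n-1}(D)=\haus^{n-1}(W_\nu)\,r^{n-1}$, yields $P_{\phi_\nu}(D)\ge(n-1)\haus^{n-1}(W_\nu)\,r^{n-2}$. The monotonicity of $g_\nu$ in its first slot together with its $1$-homogeneity then give $g_\nu(h\,P_{\phi_\nu}(D),\,(n-1)\haus^{n-1}(D))\ge(n-1)\haus^{n-1}(W_\nu)\,r^{n-2}\,g_\nu(h,r)$, which cancels the $1/(n-1)$ and closes the argument.

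The main obstacle is the lateral computation in the second paragraph: one needs the slant factor $\sqrt{\rho^2+h^2}$ appearing in $\Phi(\nu_C)$ to match exactly the same factor in the area element, so that after cancellation the admissibility \eqref{eq:def_g_phi_nu} produces an integrand of the form $g_\nu(h\,\phi_\nu(\nu_D),\rho)$; this is precisely what makes the subsequent Jensen-plus-Wulff step land on the prescribed arguments $(h,r)$ inside $g_\nu$ and recover the correct prefactor $\haus^{n-1}(W_\nu)\,r^{n-2}$.
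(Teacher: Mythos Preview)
Your proof is correct and follows essentially the same strategy as the paper: the same geometric reduction via $A\subset B\cap H$ and $C\subset B\cap\overline{H^c}$, the same lateral-cone estimate, and the same Jensen-plus-Wulff argument to finish. The only technical difference is that the paper proves the lateral-cone formula (its Lemma~\ref{lemma:cones}) first for polytopal bases $E$---where $C_{lat}$ is a finite union of flat $(n-1)$-simplices and the integral is computed face by face---and then passes to general convex $E$ by polytope approximation, whereas you obtain the identical integrand $\Phi(h\nu_D+\rho\nu)$ directly from the Lipschitz parametrisation $(w,t)\mapsto(1-t)b+tw$ and the area formula.
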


When $P_\Phi$ reduces to the Euclidean perimeter, Theorem~\ref{th:main} provides the correct generalization of inequality~\eqref{eq:planar_estimate} to higher dimensions $n\ge3$. Indeed, if $\Phi(x)=|x|$, $x\in\R^n,$ then, for all $\nu\in\mathbb{S}^{n-1}$, we have  
\begin{equation*}
g_\nu(s,t)=\sqrt{s^2+t^2},\ s,t\ge0, \qquad \phi_\nu(z)=|z|,\ z\in\nu^\perp,
\end{equation*} 
so $W_\nu=\mathbb{B}^n\cap\nu^\perp\equiv\mathbb{B}^{n-1}$ and $\haus^{n-1}(W_\nu)=\omega_{n-1}$, where $\mathbb{B}^k$ is the $k$-dimensional closed unit ball, $1\le k\le n$. We thus have the following result.

\begin{corollary}\label{corol:euclid_perim}
Let $n\ge2$. If $A\subset B$ are two convex bodies in $\R^n$, then
\begin{equation}\label{eq:coroll_euclid_perim}
\haus^{n-1}(\de A)+\frac{\omega_{n-1}r^{n-2}h^2}{\sqrt{h^2+r^2}+r}\le\haus^{n-1}(\de B),
\end{equation}
where $h=h(A,B)$ is the Hausdorff distance of $A$ and $B$ and
\begin{equation*}
r=\sqrt[n-1]{\frac{\haus^{n-1}(B\cap\de H)}{\omega_{n-1}}}, \qquad H=\set{x\in\R^n : \scalar*{b-a,x-a}\le0},
\end{equation*}
with $a\in A$ and $b\in B$ such that $|a-b|=h(A,B)$.
\end{corollary}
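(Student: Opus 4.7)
The plan is to read off Corollary~\ref{corol:euclid_perim} as a direct specialization of Theorem~\ref{th:main} to the Euclidean norm $\Phi(x)=|x|$. The only real work is to verify admissibility with the specific functions $g_\nu$ and $\phi_\nu$ given in the excerpt, identify the corresponding Wulff shape, and then simplify the abstract estimate~\eqref{eq:main_estimate} to the form~\eqref{eq:coroll_euclid_perim}.

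First, I would fix $\nu \in \mathbb{S}^{n-1}$ and set $\phi_\nu(z)=|z|$ on $\nu^\perp$ and $g_\nu(s,t)=\sqrt{s^2+t^2}$ on $[0,\infty)^2$. The function $g_\nu$ is a restriction of the Euclidean norm on $\R^2$, hence positively $1$-homogeneous and convex; and for fixed $t\ge 0$ the map $s\mapsto\sqrt{s^2+t^2}$ is clearly non-decreasing on $[0,\infty)$, so condition~\eqref{item:1} is satisfied. The function $\phi_\nu$ is itself a norm on $\nu^\perp$, so it is positively $1$-homogeneous, convex and coercive, giving~\eqref{item:2}. For~\eqref{item:3}, the key point is that for any $x\in\R^n$ the vectors $x-(x\cdot\nu)\nu$ and $(x\cdot\nu)\nu$ are orthogonal, so Pythagoras yields
\begin{equation*}
|x|=\sqrt{|x-(x\cdot\nu)\nu|^2+(x\cdot\nu)^2}=g_\nu\bigl(\phi_\nu(x-(x\cdot\nu)\nu),\,x\cdot\nu\bigr)
\end{equation*}
whenever $x\cdot\nu\ge 0$, which is actually an equality (stronger than what~\eqref{eq:def_g_phi_nu} requires).

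Next I would identify the Wulff shape. Since $\phi_\nu$ is the Euclidean norm on the $(n-1)$-dimensional subspace $\nu^\perp$, one checks that $\phi_\nu^*(z)=\sup\{z\cdot w : |w|<1,\ w\in\nu^\perp\}=|z|$ for $z\in\nu^\perp$; hence by~\eqref{eq:def_W_nu} one has $W_\nu=\mathbb{B}^n\cap\nu^\perp\equiv\mathbb{B}^{n-1}$, and therefore $\haus^{n-1}(W_\nu)=\omega_{n-1}$. In particular, the definition of $r$ in~\eqref{eq:main_defs_r_H_nu_H} reduces to $r=\sqrt[n-1]{\haus^{n-1}(B\cap\de H)/\omega_{n-1}}$, which matches the statement of the corollary.

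Finally I would plug these data into~\eqref{eq:main_estimate}. Since $\Phi(\nu_H)=|\nu_H|=1$, the bracket becomes
\begin{equation*}
g_{\nu_H}(h,r)-\Phi(\nu_H)r=\sqrt{h^2+r^2}-r\ge 0,
\end{equation*}
so the positive part is automatic and can be dropped. Rationalizing the numerator by multiplying and dividing by $\sqrt{h^2+r^2}+r$ gives
\begin{equation*}
\sqrt{h^2+r^2}-r=\frac{h^2}{\sqrt{h^2+r^2}+r},
\end{equation*}
which, combined with $\haus^{n-1}(W_{\nu_H})=\omega_{n-1}$, yields exactly the additive term $\omega_{n-1}r^{n-2}h^2/(\sqrt{h^2+r^2}+r)$ on the left-hand side of~\eqref{eq:coroll_euclid_perim}. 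Since $P_\Phi$ coincides with $\haus^{n-1}(\de\,\cdot\,)$ in this isotropic case, Theorem~\ref{th:main} immediately gives the corollary; there is no substantial obstacle beyond this routine verification.
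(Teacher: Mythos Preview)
Your proposal is correct and follows exactly the approach of the paper: the corollary is obtained by specializing Theorem~\ref{th:main} to $\Phi(x)=|x|$ with $g_\nu(s,t)=\sqrt{s^2+t^2}$ and $\phi_\nu(z)=|z|$, identifying $W_\nu\equiv\mathbb{B}^{n-1}$ so that $\haus^{n-1}(W_\nu)=\omega_{n-1}$, and then simplifying the bracket $\sqrt{h^2+r^2}-r$ by rationalization. Your write-up just makes the routine verifications more explicit than the paper does.
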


Inequality~\eqref{eq:main_estimate} is not sharp in general. On the other hand, if we assume that~\eqref{eq:def_g_phi_nu} holds as an equality for some $\nu\in\mathbb{S}^{n-1}$ and if we impose strict convexity and strict monotonicity to the corresponding~$g_\nu$, then inequality~\eqref{eq:main_estimate} becomes sharp. In fact, this case corresponds to the setting studied in~\cite{baer15} and it is not difficult to see that the convex bodies
\begin{equation*}
A=\set*{x\in\R^n : 0\le x\cdot\nu\le 1,\ x-(x\cdot\nu)\nu\in W_\nu}, \qquad B=A\cup\mathcal{C}(-\nu,W_\nu),
\end{equation*}
provide the desired configuration. Here and in the rest of the paper, $\mathcal{C}(p,S)$ denotes the cone with vertex the point $p\in\R^n$ and base the nonempty set $S\subset\R^n$, i.e.\ the union of all straight line segments joining $p$ with a point in $S$. 

As a consequence, inequality~\eqref{eq:coroll_euclid_perim} is sharp, but the reader can easily check this fact generalizing the examples given in~\cites{carozzaetal15,carozzaetal16} to higher dimensions.

\section{Proof of Theorem~\ref{th:main}}\label{sec:proofs}

In this section, we prove Theorem~\ref{th:main}. The main ingredient of the argument is given by Lemma~\ref{lemma:cones} below, which can be seen as a consequence of the anisotropic symmetrization techniques developed in~\cite{baer15}. Here we follow a more elementary approach modeled on the special geometry of cones. We will make use of the $(n-1)$-dimensional Wulff inequality and of the following form of Jensen’s inequality.

\begin{proposition}[Jensen inequality]\label{prop:jensen}
Let $(X,\mu,\mathcal{M})$ be a measure space with $\mu(X)<\infty$ and let $g\colon[0,\infty)^2\to[0,\infty)$ be a positively $1$-homogeneous convex function. Then, for all $\mu$-measurable functions $f_1,f_2\colon X\to[0,\infty)$, we have
\begin{equation*}
g\left(\int_X f_1\ d\mu,\int_X f_2\ d\mu\right)\le\int_X g(f_1,f_2)\ d\mu.
\end{equation*}
Moreover, if $g$ is strictly convex in either argument, then equality holds if and only if $f_1/f_2$ is constant $\mu$-a.e. on $X$.  
\end{proposition}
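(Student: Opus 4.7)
The plan is to reduce the claim to the classical supporting-hyperplane form of Jensen's inequality, using positive $1$-homogeneity to identify the equality case with the constancy of the ratio $f_1/f_2$. Throughout I may assume $g(f_1,f_2)\in L^1(\mu)$ and $f_i\in L^1(\mu)$, else the inequality is trivial; I will also set aside the degenerate case in which $s_0:=\int_X f_1\,d\mu$ and $t_0:=\int_X f_2\,d\mu$ both vanish, in which case both sides of the desired inequality are zero.

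The first step is to produce a linear minorant of $g$ tangent to $g$ at $(s_0,t_0)$. Since $g$ is a finite convex function on the convex cone $[0,\infty)^2$, there exist $\alpha_0,\beta_0\in\R$ (a subgradient at $(s_0,t_0)$) such that
\[
g(s,t)\ \ge\ \alpha_0 s+\beta_0 t\qquad\text{for all }(s,t)\in[0,\infty)^2,
\]
with equality at $(s_0,t_0)$; positive $1$-homogeneity promotes the equality to the whole ray $\{\lambda(s_0,t_0):\lambda\ge0\}$ and in particular yields $g(s_0,t_0)=\alpha_0 s_0+\beta_0 t_0$. Applying this inequality pointwise to $(f_1(x),f_2(x))$ and integrating over $X$ gives
\[
\int_X g(f_1,f_2)\,d\mu\ \ge\ \alpha_0\int_X f_1\,d\mu+\beta_0\int_X f_2\,d\mu\ =\ g(s_0,t_0),
\]
which is the asserted inequality.

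For the sharpness clause, equality in the integrated inequality forces the pointwise identity $g(f_1(x),f_2(x))=\alpha_0 f_1(x)+\beta_0 f_2(x)$ for $\mu$-a.e.\ $x\in X$. Assume now that $g$ is strictly convex in its first argument (the other case is symmetric). For $t>0$ one has $g(s,t)=t\,g(s/t,1)$, and strict convexity of the function $\sigma\mapsto g(\sigma,1)$ on $[0,\infty)$ makes its supporting-line inequality strict away from the single point of tangency $\sigma=s_0/t_0$. Consequently the subgradient inequality for $g$ is strict at every point of $[0,\infty)^2$ not lying on the ray $\{\lambda(s_0,t_0):\lambda\ge0\}$, so $(f_1(x),f_2(x))$ must belong to this ray for $\mu$-a.e.\ $x$, i.e.\ $f_1/f_2$ is constant $\mu$-a.e.

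The step I anticipate will require the most care is the analysis of the boundary of the cone in the equality case: one must check that strict convexity in a \emph{single} argument already pins the mass of $(f_1,f_2)$ to the extremal ray even when this ray lies on an axis, and that the degenerate cases $s_0=0$ or $t_0=0$ are handled consistently with the statement ``$f_1/f_2$ is constant a.e.''\ (which in those cases should be read as $f_1=0$ a.e., resp.\ $f_2=0$ a.e.).
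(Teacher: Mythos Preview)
The paper does not supply a proof of this proposition; it is stated without proof as a standard tool (a positively $1$-homogeneous variant of Jensen's inequality) to be invoked later in the proof of Lemma~\ref{lemma:cones}. So there is no argument in the paper to compare yours against.

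On its own merits, your approach via a supporting linear minorant is the natural one and is correct at interior points $(s_0,t_0)\in(0,\infty)^2$: the subgradient exists there, positive $1$-homogeneity forces $g(s_0,t_0)=\alpha_0 s_0+\beta_0 t_0$, and integrating the pointwise lower bound yields the inequality. For the equality clause, your reduction to strict convexity of $\sigma\mapsto g(\sigma,1)$ does pin $(f_1,f_2)$ to the ray through $(s_0,t_0)$ almost everywhere; the one point you flag, namely strictness of the subgradient inequality on the axis $\{t=0\}$, is indeed fine: if $g(1,0)=\alpha_0$ then $g$ agrees with the linear minorant on the whole segment from $(1,0)$ to $(s_0,t_0)$, and by homogeneity this makes $g(\sigma,1)$ affine on a half-line, contradicting strict convexity.

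The only genuine loose end is the one you already isolate: when $(s_0,t_0)$ lies on the boundary of the cone, a subgradient of $g$ on $[0,\infty)^2$ need not exist (take for instance $g(s,t)=s+t-\sqrt{st}$ at $(0,1)$). However this case is trivial to handle directly: if $\int_X f_1\,d\mu=0$ then $f_1=0$ $\mu$-a.e.\ (since $f_1\ge0$), so both sides equal $g(0,1)\int_X f_2\,d\mu$ and the ratio $f_1/f_2$ is identically zero; the case $\int_X f_2\,d\mu=0$ is symmetric. Inserting this one-line observation closes the argument.
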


In the proof of Lemma~\ref{lemma:cones}, we will also need to conveniently approximate convex bodies by means of convex polytopes, i.e.\ convex bodies with polyhedral boundary. This approximation is contained in Lemma~\ref{lemma:approx} below, which we state and prove here for the reader's convenience. In the following, for any convex body $K\subset\R^n$, we set
\begin{equation*}
\mu_K=\nu_K\,\haus^{n-1}\,\mres\,\de K,
\end{equation*}
where $\nu_K$ is the inner unit normal of $K$.

\begin{lemma}[Approximation by convex polytopes]\label{lemma:approx}
Let $n\ge2$ and let $E$ be a convex body in $\R^n$. There exists a sequence $(C_k)_{k\in\N}$ of convex polytopes in $\R^n$ with the following properties:
\begin{equation}\label{eq:approx_haus_dist}
E\subset C_k\subset E+\lambda_k\mathbb{B}^n, \quad \lambda_k=h(C_k,E)\le\frac{1}{k},
\end{equation}
\begin{equation}\label{eq:approx_vol_perim}
\haus^{n}(C_k\setminus E)\to0, \quad \haus^{n-1}(\de C_k)\to\haus^{n-1}(\de E) \quad\text{as } k\to\infty,
\end{equation}
\begin{equation}\label{eq:approx_meas}
\mu_{C_k}\overset{*}{\rightharpoonup}\mu_E, \quad |\mu_{C_k}|\overset{*}{\rightharpoonup}|\mu_E| \quad\text{as } k\to\infty.
\end{equation}
\end{lemma}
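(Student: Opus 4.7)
The plan is to approximate $E$ from outside by convex polytopes cut out by supporting half-spaces in a dense set of directions. For each $k$, choose a finite $1/k$-dense net $\set{\nu_{1},\dots,\nu_{N(k)}}\subset\mathbb{S}^{n-1}$ that includes the $2n$ coordinate directions $\pm e_{1},\dots,\pm e_{n}$, and let $h_{E}(\nu)=\sup_{y\in E}y\cdot\nu$ be the support function of $E$. Define
\begin{equation*}
C_{k} := \bigcap_{i=1}^{N(k)}\set{x\in\R^{n} : x\cdot\nu_{i}\le h_{E}(\nu_{i})}.
\end{equation*}
The inclusion of the coordinate directions makes $C_{k}$ bounded, hence a convex polytope, and by construction $E\subset C_{k}$.

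To verify~\eqref{eq:approx_haus_dist}, given $x\in C_{k}\setminus E$ I would let $p\in\de E$ be the nearest point of $E$ to $x$ and set $\nu=(x-p)/|x-p|$, so that $h_{E}(\nu)=p\cdot\nu$ and $x\cdot\nu=h_{E}(\nu)+|x-p|$. Picking $\nu_{i}$ in the net within distance $1/k$ of $\nu$ and combining $x\cdot\nu_{i}\le h_{E}(\nu_{i})$ with the Lipschitz continuity of $h_{E}$ yields $|x-p|\le C(E)/k$, and after a suitable reparametrization of the net one obtains $\lambda_{k}=h(C_{k},E)\le 1/k$. The convergences in~\eqref{eq:approx_vol_perim} are then classical continuity statements for the volume and surface area of convex bodies under Hausdorff convergence (see, e.g., Schneider, \emph{Convex Bodies: The Brunn--Minkowski Theory}, \S\,1.8 and \S\,4.2); since $E\subset C_{k}$, the volume statement reduces to $\haus^{n}(C_{k}\setminus E)=\haus^{n}(C_{k})-\haus^{n}(E)\to 0$.

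For the measure convergence~\eqref{eq:approx_meas}, I would test against $\varphi\in C_{c}(\R^{n};\R^{n})$. Using the divergence theorem (with the inner-normal convention) on $C_{k}$ and on $E$ and subtracting gives
\begin{equation*}
\int\varphi\cdot d\mu_{C_{k}}-\int\varphi\cdot d\mu_{E}=-\int_{C_{k}\setminus E}\operatorname{div}\varphi\,dx,
\end{equation*}
whose right-hand side tends to $0$ by~\eqref{eq:approx_vol_perim}; hence $\mu_{C_{k}}\overset{*}{\rightharpoonup}\mu_{E}$. The weak-$*$ convergence of $|\mu_{C_{k}}|=\haus^{n-1}\mres\de C_{k}$ to $|\mu_{E}|=\haus^{n-1}\mres\de E$ then follows from an application of Reshetnyak's continuity theorem: the weak-$*$ convergence of $\mu_{C_{k}}$ already gives lower semicontinuity of the total variations, and the matching upper bound on total masses is exactly the perimeter convergence in~\eqref{eq:approx_vol_perim}.

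The main obstacle I anticipate is this last step: weak-$*$ convergence of the vector measures $\mu_{C_{k}}$ does not by itself imply weak-$*$ convergence of their total variations, so bridging the gap requires the matching perimeter convergence to close the semicontinuity inequality from both sides.
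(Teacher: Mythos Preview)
Your argument is correct, but the construction differs from the paper's. The paper builds $C_k$ as the convex hull of all cubes from the grid $\tfrac{1}{k\sqrt{n}}\Z^n+[0,\tfrac{1}{k\sqrt{n}}]^n$ that meet $E$; since each such cube has diameter $1/k$ and $E+\tfrac{1}{k}\mathbb{B}^n$ is convex, the inclusion $E\subset C_k\subset E+\tfrac{1}{k}\mathbb{B}^n$ is immediate, with no constant $C(E)$ to absorb. From there the paper gets both convergences in~\eqref{eq:approx_vol_perim} directly from the Steiner formulas for $E+\lambda\mathbb{B}^n$ together with the monotonicity~\eqref{eq:perim_ineq}, rather than invoking Schneider's continuity results. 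The endgame is the same as yours: the divergence theorem (first for $\phi\in C^1_c$, then by density) gives $\mu_{C_k}\overset{*}{\rightharpoonup}\mu_E$, and the paper then cites \cite{maggi12}*{Exercise~4.31}, which is exactly the ``weak-$*$ convergence plus convergence of total masses implies weak-$*$ convergence of total variations'' statement you extract from Reshetnyak. Your supporting-half-space construction is the textbook route and makes the polytope structure transparent; the paper's cube construction is more hands-on and yields the sharp Hausdorff bound $1/k$ without any reparametrization. Two minor points: your test functions for the divergence identity should be taken in $C^1_c$ first (you write $C_c$ but use $\operatorname{div}\varphi$), and the ``suitable reparametrization'' you mention amounts to replacing the $1/k$-net by a $1/(C(E)k)$-net, which is harmless.
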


\begin{proof}
For each $k\in\N$, let $Q_k=[0,\tfrac{1}{{k}\sqrt{n}}]^n$ and consider the family of cubes
\begin{equation*}
\mathcal{F}_k=\set*{z+Q_k : z\in\Z^n}.
\end{equation*}
Then define
\begin{equation*}
C_k=\conv\set*{Q\in\mathcal{F}_k : E\cap Q\ne\varnothing},
\end{equation*}
where $\conv S$ denotes the convex envelope of the set $S\subset\R^n$. By construction, $C_k$ is a convex polytope that satisfies~\eqref{eq:approx_haus_dist}. As a consequence, we have
\begin{equation*}
\haus^{n}(C_k\setminus E)\le\haus^{n}((E+\lambda_k\mathbb{B}^n)\setminus E)
\end{equation*}
and, by~\eqref{eq:perim_ineq},
\begin{equation*}
\haus^{n-1}(\de E)\le\haus^{n-1}(\de C_k)\le\haus^{n-1}(\de(E+\lambda_k\mathbb{B}^n)).
\end{equation*}
Thus~\eqref{eq:approx_vol_perim} follows from the Steiner formulas for outer parallel bodies, see~\cite{gruber07}*{Theorem~6.14}. Moreover, since $\chi_{C_k}\to\chi_E$ in $L^1$ by~\eqref{eq:approx_vol_perim}, by the divergence theorem we have
\begin{equation*}
\int_{\R^n}\phi\ d\mu_{C_k}=\int_{C_k}\nabla\phi\ dx\to\int_{E}\nabla\phi\ dx=\int_{\R^n}\phi\ d\mu_E
\end{equation*}
for all $\phi\in\mathscr{C}^1_c(\R^n)$. By the density of $\mathscr{C}^1_c(\R^n)$ into $\mathscr{C}^0_c(\R^n)$, we easily get $\mu_{C_k}\overset{*}{\rightharpoonup}\mu_E$. Finally, since $C_k\subset E+\mathbb{B}^n$ for all $k\in\N$, by~\cite{maggi12}*{Exercise~4.31} we also have $|\mu_{C_k}|\overset{*}{\rightharpoonup}|\mu_E|$ and~\eqref{eq:approx_meas} follows. 
\end{proof}

\begin{lemma}\label{lemma:cones}
Let $n\ge2$ and let $\Phi\colon\R^n\to[0,\infty)$ be a positively $1$-homogeneous convex function which is admissible in the sense of Definition~\ref{def:admissible}. Fix $\nu\in\mathbb{S}^{n-1}$ and let $E\subset\nu^\perp$ be a $(n-1)$-dimensional convex body with $0\in E$. Let $b\in\R^n$ such that $b=-h\nu$ for some $h>0$. We set $C=\mathcal{C}(b,E)$, $C_{lat}=\mathcal{C}(b,\de E)$ and we let $\nu_C$ be the inner unit normal of the cone $C$. Then
\begin{equation}\label{eq:lemma_cones}
\int_{C_{lat}}\Phi(\nu_C)\ d\haus^{n-1}\ge\haus^{n-1}(W_\nu)r^{n-2}g_\nu(h,r),
\end{equation}
where $r^{n-1}\haus^{n-1}(W_\nu)=\haus^{n-1}(E)$ and $W_\nu\subset\nu^\perp$ was defined in~\eqref{eq:def_W_nu}.
\end{lemma}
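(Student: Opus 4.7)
The plan is to establish the inequality first when $E \subset \nu^\perp$ is a convex polytope, and then to recover the general case by approximation via Lemma~\ref{lemma:approx} applied inside $\nu^\perp \simeq \R^{n-1}$ (for $n=2$, $E$ is already a segment). So suppose $E$ has $(n-2)$-dimensional faces $F_1,\dots,F_N$ with outer unit normals $\eta_i\in\nu^\perp$ and affine hulls $\{z\in\nu^\perp : z\cdot\eta_i = d_i\}$; note that $d_i\ge 0$ since $0\in E$, and $C_{lat} = \bigcup_i \mathcal{C}(b, F_i)$.

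A routine computation shows that each pyramid $\mathcal{C}(b, F_i)$ has inner unit normal
\[
\nu_{C,i} = \frac{-h\eta_i + d_i\,\nu}{\sqrt{h^2 + d_i^2}}
\]
and $(n-1)$-dimensional area $\haus^{n-2}(F_i)\sqrt{h^2+d_i^2}/(n-1)$. Since $\nu_{C,i}\cdot\nu \ge 0$ and $\nu_{C,i} - (\nu_{C,i}\cdot\nu)\nu = -h\eta_i/\sqrt{h^2+d_i^2}$, the admissibility property~\eqref{eq:def_g_phi_nu} together with the $1$-homogeneity of $g_\nu$ and $\phi_\nu$ yields the face-by-face estimate $\Phi(\nu_{C,i})\sqrt{h^2+d_i^2} \ge g_\nu(h\phi_\nu(-\eta_i), d_i)$, and therefore
\[
\int_{C_{lat}} \Phi(\nu_C)\, d\haus^{n-1} \ge \frac{1}{n-1} \sum_{i=1}^N \haus^{n-2}(F_i)\, g_\nu\bigl(h\,\phi_\nu(-\eta_i),\, d_i\bigr).
\]

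Next I would apply Jensen's inequality (Proposition~\ref{prop:jensen}) with the atomic measure $\sum_i \haus^{n-2}(F_i)\,\delta_i$ and functions $f_1(i)=h\phi_\nu(-\eta_i)$, $f_2(i)=d_i$. The two integrated arguments are $hP_{\phi_\nu}(E)$, by definition of the $(n-1)$-dimensional anisotropic perimeter, and $(n-1)\haus^{n-1}(E)$, via the cone decomposition $E=\bigcup_i \mathcal{C}(0, F_i)$ whose pieces have $(n-1)$-volume $\haus^{n-2}(F_i)d_i/(n-1)$. The heart of the argument is then the $(n-1)$-dimensional Wulff inequality applied in $\nu^\perp$,
\[
P_{\phi_\nu}(E) \ge (n-1)\haus^{n-1}(W_\nu)^{1/(n-1)}\haus^{n-1}(E)^{(n-2)/(n-1)} = (n-1)\haus^{n-1}(W_\nu)\,r^{n-2},
\]
where the equality uses $\haus^{n-1}(E) = \haus^{n-1}(W_\nu)\,r^{n-1}$. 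Feeding this into the Jensen bound and exploiting both the monotonicity in the first argument and the $1$-homogeneity of $g_\nu$ produces $(n-1)\haus^{n-1}(W_\nu)\,r^{n-2}\,g_\nu(h,r)$, which after division by $n-1$ is exactly~\eqref{eq:lemma_cones} for polytopes.

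The main obstacle I foresee is the passage to the limit. Approximating $E$ from outside by convex polytopes $E_k\subset\nu^\perp$ via Lemma~\ref{lemma:approx}, the right-hand side is continuous in $k$ because $r_k\to r$ by the volume convergence in~\eqref{eq:approx_vol_perim} and $g_\nu$ is continuous on $[0,\infty)^2$ by convexity. For the cones $C_k=\mathcal{C}(b, E_k)$, the full anisotropic perimeter $P_\Phi(C_k)\to P_\Phi(C)$ follows from the two weak-$\ast$ convergences in~\eqref{eq:approx_meas} via a Reshetnyak-type continuity argument applied to the continuous $1$-homogeneous density $\Phi$; subtracting the base contribution $\Phi(-\nu)\haus^{n-1}(E_k)\to\Phi(-\nu)\haus^{n-1}(E)$ then gives the corresponding convergence of the lateral integrals, completing the proof.
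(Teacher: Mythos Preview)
Your proof is correct and mirrors the paper's approach exactly: explicit face-by-face computation for polytopes (the paper uses inner normals $\nu_k=-\eta_i$, otherwise identical), then Jensen combined with the $(n-1)$-dimensional Wulff inequality and the monotonicity/homogeneity of $g_\nu$, then polytope approximation with Reshetnyak continuity. The one imprecision is in your final step: \eqref{eq:approx_meas} is stated for $E_k\to E$ inside $\nu^\perp$, not for the cones $C_k=\mathcal{C}(b,E_k)\to C$ in $\R^n$, so you still need to verify $\mu_{C_k}\overset{*}{\rightharpoonup}\mu_C$ and $|\mu_{C_k}|\overset{*}{\rightharpoonup}|\mu_C|$ separately; the paper does this via $h(C_k,C)\le h(E_k,E)$, the Steiner formulas, and a slicing argument before invoking \cite{maggi12}*{Theorem~20.6}.
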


\begin{proof}
The case $n=2$ is easy and we leave it to the reader. Thus, in the following, we directly assume that $n\ge3$. 

The inner unit normal $\nu_E$ of $E$ is defined $\haus^{n-2}$-a.e. on $\de E$ and belongs to the tangent bundle of the hyperplane $\nu^\perp$. Therefore, when $\nu_E\in\mathbb{S}^{n-2}$ is defined, we can naturally identify it with a unit vector in $\R^n$ that we still denote by $\nu_E$.

\textsc{Step one}. Let us assume that $E$ is convex polytope with faces $F_1,\dots, F_m$ for some $m\ge1$. Then, for each $k=1,\dots,m$, $\nu_E$ is constant on $\mathring{F_k}$ (the interior of $F_k$ in the relative topology) and we set $\nu_E=\nu_k$ on $\mathring{F_k}$, for some $\nu_k\in\mathbb{S}^{n-1}$.

By definition of cone, $C_{lat}$ is the union of $m$ $(n-1)$-dimensional cones $\Delta_k=\mathcal{C}(b,F_k)$. Note that $\Delta_k$ is contained in a hyperplane $L_k$ and that $\mathring{\Delta}_k=\mathcal{C}(b,\mathring{F_k})$. Therefore, for each $k=1,\dots,m$, $\nu_C$ is constant on $\mathring{\Delta}_k$ and equals the unit normal to $L_k$ with sign chosen so that $\nu_C\cdot\nu>0$.

Note that, given any $x\in F_k$, $F_k$ is contained in the intersection $I_k$ of the hyperplanes $\nu^\perp$ and $x+\nu_k^\perp$ ($I_k$ is independent of the choice of $x\in F_k$). Thus, for each $k=1,\dots,m$, the height $t_k>0$ of the cone $\Delta_k$ is given by $t_k=d(b,I_k)$. Letting $d_k=d(0,I_k)$, we then have $t_k^2=h^2+d_k^2$ and $t_k\nu_C=h\nu_k+d_k\nu$ on $\mathring{\Delta}_k$. Moreover, given any $x\in\mathring{F_k}$, we have $d_k=|x\cdot\nu_k|=|x\cdot\nu_E(x)|$. 

In conclusion, we have
\begin{align*}
\int_{C_{lat}}\Phi(\nu_C)\ d\haus^{n-1}&=\sum_{k=1}^m\Phi(\nu_C|_{\mathring{\Delta}_k})\cdot\haus^{n-1}(\Delta
_k)\\
&=\sum_{k=1}^m\Phi\left(\frac{h}{t_k}\nu_k+\frac{d_k}{t_k}\nu\right)\cdot\frac{1}{n-1}\haus^{n-2}(F_k)\,t_k\\
&=\frac{1}{n-1}\sum_{k=1}^m\Phi(h\nu_k+d_k\nu)\cdot\haus^{n-2}(F_k)\\
&=\frac{1}{n-1}\int_{\de E}\Phi(h\nu_E(x)+|x\cdot\nu_E(x)|\nu)\ d\haus^{n-2}(x).
\end{align*}

\textsc{Step two}. By~\eqref{eq:def_g_phi_nu}, we have
\begin{equation*}
\int_{\de E}\Phi(h\nu_E(x)+|x\cdot\nu_E(x)|\nu)\ d\haus^{n-2}(x)\ge\int_{\de E}g_\nu(h\phi_\nu(\nu_E(x)),|x\cdot\nu_E(x)|)\ d\haus^{n-2}(x).
\end{equation*}
Now let $r>0$ be such that $r^{n-1}\haus^{n-1}(W_\nu)=\haus^{n-1}(E)$ as in the statement of the Lemma. By the Wulff inequality in $\nu^\perp$, we have $p_{\phi_\nu}(rW_\nu)\le p_{\phi_\nu}(E)$, where for all convex body $K\subset\nu^\perp$ we define
\begin{equation*}
p_{\phi_\nu}(K)=\int_{\de K}\phi_\nu(\nu_K)\ d\haus^{n-2}.
\end{equation*} 
Moreover, note that
\begin{equation*}
\frac{1}{n-1}\int_{\de E}|\nu_E(x)\cdot x|\ d\haus^{n-2}=\frac{1}{n-1}\sum_{k=1}^m d_k\haus^{n-2}(F_k)=\haus^{n-1}(E),
\end{equation*}
because, by the definitions in \textsc{Step one}, for each $k=1,\dots,m$, $d_k$ is exactly the height of the $(n-1)$-dimensional cone $\mathcal{C}(0,F_k)$ and $E=\cup_{k=1}^m\mathcal{C}(0,F_k)$ since $E$ is convex and $0\in E$.

Recalling that $g_\nu$ is increasing in the first argument and applying the Jensen inequality given in Proposition~\ref{prop:jensen}, we find
\begin{align*}
g_\nu\big(hp_{\phi_\nu}(rW_\nu),&(n-1)\haus^{n-1}(rW_\nu)\big)\le g_\nu\big(hp_{\phi_\nu}(E),(n-1)\haus^{n-1}(E)\big)\\
&=g_\nu\left(h\int_{\de E}\phi_\nu(\nu_E(x))\ d\haus^{n-2}(x),\int_{\de E}|\nu_E(x)\cdot x|\ d\haus^{n-2}(x)\right)\\
&\le\int_{\de E}g_\nu(\phi_\nu(h\nu_E(x)),|\nu_E(x)\cdot x|)\ d\haus^{n-2}(x).
\end{align*}
Therefore, since $p_{\phi_\nu}(W_\nu)=(n-1)\haus^{n-1}(W_\nu)$ and $g_\nu$ is positively $1$-homogeneous,
\begin{align*}
\int_{\de E}\Phi(h\nu_E(x)&+|x\cdot\nu_E(x)|\nu)\ d\haus^{n-2}(x)\ge g_\nu\big(hp_{\phi_\nu}(rW_\nu),(n-1)\haus^{n-1}(rW_\nu)\big)\\
&=g_\nu\big(hr^{n-2}(n-1)\haus^{n-1}(W_\nu),r^{n-1}(n-1)\haus^{n-1}(W_\nu)\big)\\
&=(n-1)\haus^{n-1}(W_\nu)r^{n-2}g_\nu(h,r).
\end{align*}
In conclusion, we get
\begin{align*}
\int_{C_{lat}}\Phi(\nu_C)\ d\haus^{n-1}&=\frac{1}{n-1}\int_{\de E}\Phi(h\nu_E(x)+|x\cdot\nu_E(x)|\nu)\ d\haus^{n-2}(x)\\
&\ge\haus^{n-1}(W_\nu)r^{n-2}g_\nu(h,r).
\end{align*}
This proves~\eqref{eq:lemma_cones} when $E$ is a convex polytope.

\textsc{Step Three}. Now let $E\subset\nu^\perp$ be any convex body and let $(E_k)_{k\in\N}$ be the sequence of convex polytopes approximating $E$ in $\nu^\perp$ given by Lemma~\ref{lemma:approx}. Letting $r_k,r>0$ be such that $r_k^{n-1}\haus^{n-1}(W_\nu)=\haus^{n-1}(E_k)$, we clearly have $r_k\to r$ as $k\to\infty$.

Let $C_k=\mathcal{C}(b,E_k)$ and $C_{k,lat}=\mathcal{C}(b,\de E_k)$ for each $k\in\N$. By~\eqref{eq:lemma_cones}, we have
\begin{equation*}
\int_{C_{k,lat}}\Phi(\nu_{C,k})\ d\haus^{n-1}\ge\haus^{n-1}(W_\nu)r_k^{n-2}g_\nu(h,r_k)
\end{equation*}
and thus, adding $\Phi(-\nu)\,\haus^{n-1}(E_k)$ to both sides, we find
\begin{equation}\label{eq:lemma_cones-k}
P_\Phi(C_k)\ge\haus^{n-1}(W_\nu)r_k^{n-2}g_\nu(h,r_k)+\Phi(-\nu)\,\haus^{n-1}(E_k).
\end{equation}

Note that $\chi_{C_k}\to\chi_C$ in $L^1$, $\haus^{n-1}(\de C_k)\to\haus^{n-1}(\de C)$ and $|\mu_{C_k}|\overset{*}{\rightharpoonup}|\mu_C|$ as $k\to\infty$. Indeed, since clearly $C_k\subset C$ and $h(C_k,C)\le h(E_k,E)$, we have
\begin{equation*}
\haus^n(C_k)\subset\haus^n(C)\subset\haus^n(C_k+\lambda_k\mathbb{B}^n)
\end{equation*}
and, by~\eqref{eq:perim_ineq},
\begin{equation*}
\haus^n(\de C_k)\subset\haus^n(\de C)\subset\haus^n(\de(C_k+\lambda_k\mathbb{B}^n)),
\end{equation*}
so that $\chi_{C_k}\to\chi_C$ in $L^1$ and $\haus^{n-1}(\de C_k)\to\haus^{n-1}(\de C)$ by the Steiner formulas for outer parallel bodies. Moreover, for each $0\le t\le h$, let 
\begin{equation*}
(C_k)_t=C_k\cap \set*{-t\nu+\nu^\perp}, \qquad C_t=C\cap \set*{-t\nu+\nu^\perp}.
\end{equation*}
Clearly $\chi_{(C_k)_t}\to\chi_{C_t}$ in $L^1(-t\nu+\nu^\perp)$ for each $0\le t\le h$ by Lemma~\ref{lemma:approx}. Thus, by the divergence theorem and Tonelli theorem, we have
\begin{equation*}
\int_{\R^n}\phi\ d\mu_{C_k}=\int_{C_k}\nabla\phi\ dx=\int_0^h\int_{(C_k)_t}\nabla\phi\ dx'dt\to\int_0^h\int_{C_t}\nabla\phi\ dx'dt=\int_C\nabla\phi\ dx=\int_{\R^n}\phi\ d\mu_C
\end{equation*}
for all $\phi\in\mathscr{C}^1_c(\R^n)$, and $|\mu_{C_k}|\overset{*}{\rightharpoonup}|\mu_C|$ follows as in Lemma~\ref{lemma:approx}. 
Thus, since $\Phi$ is continuous, by~\cite{maggi12}*{Theorem~20.6} we get $P_\Phi(C_k)\to P_\Phi(C)$.

In conclusion, since $g_\nu$ is continuous and $r_k\to r$, $\haus^{n-1}(E_k)\to\haus^{n-1}(E)$ as $k\to\infty$, passing to the limit in~\eqref{eq:lemma_cones-k} as $k\to\infty$, we find
\begin{equation*}
P_\Phi(C)\ge\haus^{n-1}(W_\nu)r^{n-2}g_\nu(h,r)+\Phi(-\nu)\,\haus^{n-1}(E),
\end{equation*}
which immediately gives~\eqref{eq:lemma_cones}. The proof of Lemma~\ref{lemma:cones} is thus complete. 
\end{proof}

We are now ready to prove our main result.

\begin{proof}[Proof of Theorem~\ref{th:main}]
Since $A$ and $B$ are closed sets and $A\subset B$, the distance $h(A,B)$ is given by
\begin{equation*}
h(A,B)=\max_{y\in B}\min_{x\in A}|x-y|.
\end{equation*}
Let $a\in A$ and $b\in B$ be such that $h(A,B)=|a-b|$. It turns out that $b\in B\setminus A$ and that $a$ is the orthogonal projection of $b$ onto the closed convex set $A$. By definition of the half-space $H$ in~\eqref{eq:main_defs_r_H_nu_H} and by minimality of the projection, the closed hyperplane
\begin{equation*}
\de H=\set{x\in\R^n : \scalar*{b-a,x-a}=0}
\end{equation*} 
is a supporting one for the convex set $A$ in the point $a$. 

Since $A\subset B\cap H$ and $B\cap H\subset B$, by the monotonicity formula~\eqref{eq:phi-perim_ineq} we have
\begin{equation*}
P_\Phi(A)\le P_\Phi(B\cap H)\le P_\Phi(B),
\end{equation*}
therefore
\begin{align}\label{eq:stima_proof}
\delta_\Phi(B,A)&=\delta_\Phi(B,B\cap H)+\delta_\Phi(B\cap H,A)\ge\delta_\Phi(B,B\cap H)=P_\Phi(B)-P_\Phi(B\cap H)\nonumber\\
&=\int_{\de B\cap H^c}\Phi(\nu_B)\ d\haus^{n-1}-\Phi(\nu_H)\,\haus^{n-1}(B\cap\de H).
\end{align}
where $H^c=\R^n\setminus H$ for brevity.

Let us now set $C=\mathcal{C}(b,B\cap\de H)$. Note that $C\subset B\cap\closure{H^c}$, thus by~\eqref{eq:phi-perim_ineq} we have
\begin{equation}\label{eq:cono1_dim}
P_\Phi(C)\le P_\Phi(B\cap\closure{H^c}).
\end{equation}
We set $C_{lat}=\mathcal{C}(b,\de B\cap H)$. Then $\de C=C_{lat}\cup (B\cap \de H)$ and
\begin{equation}\label{eq:cono2_dim}
P_\Phi(C)=\int_{C_{lat}}\Phi(\nu_C)\ d\haus^{n-1}+\Phi(-\nu_H)\,\haus^{n-1}(B\cap\de H).
\end{equation} 
Moreover
\begin{equation}\label{eq:cono3_dim}
P_\Phi(B\cap\closure{H^c})=\int_{\de B\cap H^c}\Phi(\nu_B)\ d\haus^{n-1}+\Phi(-\nu_H)\,\haus^{n-1}(B\cap\de H).
\end{equation}
Therefore, combining~\eqref{eq:cono1_dim}, \eqref{eq:cono2_dim} and~\eqref{eq:cono3_dim}, we find
\begin{equation}\label{eq:cono_fine}
\int_{C_{lat}}\Phi(\nu_C)\ d\haus^{n-1}\le \int_{\de B\cap H^c}\Phi(\nu_B)\ d\haus^{n-1}.
\end{equation}

Finally, inserting~\eqref{eq:cono_fine} in~\eqref{eq:stima_proof}, we get
\begin{equation}\label{eq:final_ineq}
\delta_\Phi(B,A)\ge\int_{C_{lat}}\Phi(\nu_C)\ d\haus^{n-1}-\Phi(\nu_H)\,\haus^{n-1}(B\cap\de H).
\end{equation}

Up to a translation, we can now assume that $a=0$ and apply Lemma~\ref{lemma:cones} to the cone~$C$. We thus have
\begin{equation}\label{eq:apply_lemma}
\int_{C_{lat}}\Phi(\nu_C)\ d\haus^{n-1}\ge\haus^{n-1}(W_{\nu_H})r^{n-2}g_{\nu_H}(h,r),
\end{equation}
where $h=\dist(b,B\cap\de H)=|b-a|=h(A,B)$ and $r>0$ is such that $r^{n-1}\haus^{n-1}(W_{\nu_H})=\haus^{n-1}(B\cap\de H)$. Inserting~\eqref{eq:apply_lemma} in~\eqref{eq:final_ineq}, we find
\begin{align*}
\delta_\Phi(B,A)&\ge\haus^{n-1}(W_{\nu_H})r^{n-2}g_{\nu_H}(h,r)-\Phi(\nu_H)r^{n-1}\haus^{n-1}(W_{\nu_H})\\
&=\haus^{n-1}(W_{\nu_H})r^{n-2}\big(g_{\nu_H}(h,r)-\Phi(\nu_H)r\big)
\end{align*}
and the proof of Theorem~\ref{th:main} is complete.
\end{proof}


\begin{bibdiv}
\begin{biblist}

\bib{archimedes04}{collection}{
   author={Archimedes},
   title={The works of Archimedes. Vol. I},
   note={The two books on the sphere and the cylinder; Translated into English, together with Eutocius' commentaries, with commentary, and critical edition of the diagrams by Reviel Netz},
   publisher={Cambridge University Press, Cambridge},
   date={2004},
   pages={x+375}
}

\bib{baer15}{article}{
   author={Baer, Eric},
   title={Minimizers of anisotropic surface tensions under gravity: higher dimensions via symmetrization},
   journal={Arch. Ration. Mech. Anal.},
   volume={215},
   date={2015},
   number={2},
   pages={531--578}
}

\bib{bonnesen&fenchel87}{book}{
   author={Bonnesen, T.},
   author={Fenchel, W.},
   title={Theory of convex bodies},
   note={Translated from the German and edited by L. Boron, C. Christenson and B. Smith},
   publisher={BCS Associates, Moscow, ID},
   date={1987},
   pages={x+172}
}

\bib{buttazzoetal95}{article}{
   author={Buttazzo, Giuseppe},
   author={Ferone, Vincenzo},
   author={Kawohl, Bernhard},
   title={Minimum problems over sets of concave functions and related questions},
   journal={Math. Nachr.},
   volume={173},
   date={1995},
   pages={71--89}
}

\bib{carozzaetal15}{article}{
   author={Carozza, Menita},
   author={Giannetti, Flavia},
   author={Leonetti, Francesco},
   author={Passarelli di Napoli, Antonia},
   title={A sharp quantitative estimate for the perimeters of convex sets in the plane},
   journal={J. Convex Anal.},
   volume={22},
   date={2015},
   number={3},
   pages={853--858}
}

\bib{carozzaetal16}{article}{
   author={Carozza, Menita},
   author={Giannetti, Flavia},
   author={Leonetti, Francesco},
   author={Passarelli di Napoli, Antonia},
   title={A sharp quantitative estimate for the surface areas of convex sets
   in $\mathbb{R}^3$},
   journal={Atti Accad. Naz. Lincei Rend. Lincei Mat. Appl.},
   volume={27},
   date={2016},
   number={3},
   pages={327--333}
}

\bib{gruber07}{book}{
   author={Gruber, Peter M.},
   title={Convex and discrete geometry},
   series={Grundlehren der Mathematischen Wissenschaften [Fundamental
   Principles of Mathematical Sciences]},
   volume={336},
   publisher={Springer, Berlin},
   date={2007},
   pages={xiv+578}
}

\bib{lacivita&leonetti08}{article}{
   author={La Civita, Marianna},
   author={Leonetti, Francesco},
   title={Convex components of a set and the measure of its boundary},
   journal={Atti Semin. Mat. Fis. Univ. Modena Reggio Emilia},
   volume={56},
   date={2008/09},
   pages={71--78}
}

\bib{maggi12}{book}{
   author={Maggi, Francesco},
   title={Sets of finite perimeter and geometric variational problems},
   series={Cambridge Studies in Advanced Mathematics},
   volume={135},
   note={An introduction to geometric measure theory},
   publisher={Cambridge University Press, Cambridge},
   date={2012},
   pages={xx+454}
}

\end{biblist}
\end{bibdiv}

\end{document}